\newtheorem{theorem}{Theorem}
\newtheorem*{thm}{Theorem}
\newtheorem{lemma}{Lemma}
\theoremstyle{definition}
\theoremstyle{remark}
\begin{document}

\title[]{Superpolynomial Convergence in the\\ Riemann Rearrangement Theorem}

\author[]{Stefan Steinerberger}
\address{Department of Mathematics, University of Washington, Seattle, WA 98195, USA}
\email{steinerb@uw.edu}

\begin{abstract} 
Let $x \in \mathbb{R}$ be arbitrary and consider the `greedy' approximation of $x$ by signed harmonic sums: given
$a_n = \sum_{k \leq n} \varepsilon_k/k$ with $\varepsilon_k \in \left\{-1,1\right\}$, we set $\varepsilon_{n+1} = 1$ if $a_n \leq x$
and $\varepsilon_{n+1} = -1$ otherwise. Bettin-Molteni-Sanna showed (Adv. Math. 2020) that this procedure
has remarkable approximation properties: for almost all $x \in \mathbb{R}$ one has superpolynomial convergence in the sense that for every $k \in \mathbb{N}$ there are infinitely many $n \in \mathbb{N}$ with $|a_n - x| \leq n^{-k}$.  We extend this result from $\pm 1 \pm 1/2 \pm  1/3 \dots \pm 1/n$ to moment sequences, i.e. sequences defined as the moments of a measure $\mu$ supported on $[0,1]$.  
\end{abstract}

\maketitle

\section{Introduction}
\subsection{A Calculus Problem.}
The origin of this story could be traced to an exercise from Patrick Fritzpatrick's 2009 \textit{Advanced Calculus} \cite{fitz}.
\begin{quote}
\textit{\textbf{Example 2.2}} (Fitzpatrick \cite{fitz})\textbf{.} \textit{ Define $a_1 = 1$. If $n$ is an index such that $a_n$ has been defined, then define
$$ a_{n+1} = \begin{cases} a_n + 1/n \qquad &\mbox{if}~a_n^2 \leq 2 \\
a_n - 1/n \qquad &\mbox{if}~a_n^2 > 2.
\end{cases}$$
This formula defines the sequence recursively. We leave it as an exercise for the reader to find the first four terms of the sequence.}
\end{quote}

This sequence trivially converges to $\sqrt{2}$ and one could ask at what rate. (This question is not asked in \cite{fitz}). 
Since the gap size is $|a_{n+1} - a_n| = 1/(n+1)$, the rate cannot be much better than $\sim 1/n$. On the other hand, there are examples like
 $|a_{98} - \sqrt{2}| \sim 3 \cdot 10^{-8}$, $|a_{850} - \sqrt{2}| \sim 1 \cdot 10^{-15}$ and $|a_{3858} - \sqrt{2}| \sim 9 \cdot 10^{-21}$ which suggest that something interesting is happening.

\subsection{The Bettin-Molteni-Sanna Theorem} A complete explanation was given by Bettin-Molteni-Sanna \cite{bettin}. 
Let $x \in \mathbb{R}$ be arbitrary, $a_0 = 0$, and
$$ a_{n} = \begin{cases} a_{n-1} + 1/n \qquad &\mbox{if}~a_n \leq x \\
a_{n-1} - 1/n \qquad &\mbox{if}~a_n > x.
\end{cases}$$

\begin{thm}[Bettin-Molteni-Sanna, simplified] There is a (countable) set $X \subset \mathbb{R}$
such that if $x \in \mathbb{R} \setminus X$, then for all $k \in \mathbb{N}$ there exist infinitely many $n \in \mathbb{N}$ with
$$ |x - a_n| \leq \frac{1}{n^{k+1}}.$$
\end{thm}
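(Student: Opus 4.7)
The plan is to track the signed error $e_n := a_n - x$ and exploit a near-telescoping structure in the greedy recursion to locate a subsequence of times at which the error is superpolynomially small. Rewriting the greedy rule as $e_{n+1} = e_n - \operatorname{sgn}(e_n)/(n+1)$, one has $|e_{n+1}| = \bigl||e_n| - 1/(n+1)\bigr|$, and a short argument yields entry, for every $x$ and every $n \ge n_0(x)$, into an \emph{alternating regime} where $|e_n| \le 1/(n+1)$ and $\operatorname{sgn}(e_{n+1}) = -\operatorname{sgn}(e_n)$. In that regime the two-step recursion telescopes to
\[
 e_{n+2k} \;=\; e_n - \operatorname{sgn}(e_n)\,T_{n,k}, \qquad T_{n,k} := \sum_{j=1}^{k} \frac{1}{(n+2j-1)(n+2j)},
\]
valid so long as the alternation is preserved; here $(T_{n,k})_{k}$ is strictly increasing with consecutive gaps of order $1/(n+2k)^{2}$ and total mass $T_{n,\infty} \asymp 1/n$.

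The mechanism powering superpolynomial convergence is a \emph{resonance}: whenever $|e_n|$ happens to fall within $1/(n')^{\alpha}$ of some partial sum $T_{n,k_*}$, with $n' := n + 2k_*$ and $\alpha > 1$, the identity above yields $|e_{n'}| \le 1/(n')^{\alpha}$, gaining an exponent over the trivial $|e_n| \lesssim 1/n$. For moderate $\alpha$, a direct counting argument suffices: the $T_{n,k}$ with $n+2k \asymp n^{\alpha/2}$ are fine enough that the union of their $(n')^{-\alpha}$-neighborhoods covers a window of length $\asymp 1/n^{\alpha/2}$ inside the range of $|e_n|$. Since $e_n(x)$ is piecewise linear in $x$ with slope one away from the (dense) greedy breakpoints, this window size translates directly into a Lebesgue measure of resonant $x$'s, and Borel--Cantelli along a sparse subsequence of $n$'s produces infinitely many resonances for almost every $x$.

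To reach \emph{every} exponent, one must cascade. Starting from an improved bound $|e_{n_1}| \le 1/n_1^{\alpha_1}$ obtained at a resonant time, I would restart the alternating analysis at a later time $\widetilde{n}_1 = n_1 + O(1)$, where the error has returned to its natural scale but still encodes the smallness of $e_{n_1}$, and apply the resonance search again to produce an improved exponent $\alpha_2 > \alpha_1$; iterating yields exponents growing without bound. The final step is to trim the ``almost every'' statement to ``all $x$ outside a countable set'': failure of infinitely many resonances rigidly constrains the greedy orbit of $x$ to lie on a fixed algebraic configuration, and only $x$ in a countable family of signed harmonic sums can do so, which identifies the exceptional set $X$.

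The step I expect to be hardest is making the cascade quantitative and controlling independence across scales: one must argue that a resonance at scale $n_1$ does not exhaust the ``Diophantine randomness'' of $x$, so further resonances at $n_2 \gg n_1$ remain available and the exponent genuinely diverges. This requires a careful analysis of how the greedy partition of $\mathbb{R}$ interacts with the comb $\{T_{n,k}\}_{k}$ at successively finer scales, and I suspect this interaction is also what forces the exceptional set to be countable rather than merely Lebesgue null.
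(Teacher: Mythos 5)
Your telescoping identity and alternating-regime framework correctly capture the structure of the first improvement, and at that level of detail the proposal parallels what the paper does. But the proposal then diverges from the actual mechanism in two essential ways, and I do not believe it can be made to work as stated.

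First, the gain in exponent is not a Diophantine coincidence requiring a counting or Borel--Cantelli argument: it is deterministic. Whenever two consecutive signs agree, say a pattern $(-,+,+)$ at index $m$, the greedy rule forces $a_m - (x_{m+1}-x_{m+2}) \le x < a_m$, hence $|e_m| \le x_{m+1}-x_{m+2} \asymp 1/m^2$ automatically --- no measure of resonant $x$'s needs to be estimated. The only real numbers for which this never happens beyond some index are those whose sign sequence eventually alternates forever; each such $x$ is uniquely determined by a finite initial segment and the index at which pure alternation begins, so there are only countably many. This is exactly the paper's Main Lemma (Lemma~2). Your Borel--Cantelli route would at best deliver ``almost every $x$,'' which is strictly weaker than ``all $x$ outside a countable set,'' and the closing claim that failure of resonance confines $x$ to an ``algebraic configuration'' is an assertion rather than an argument --- yet it is doing all the work of upgrading a.e.\ to countable-complement. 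You also flag independence across scales as the hardest step, and I agree: I do not see how the second Borel--Cantelli lemma (or any probabilistic surrogate) would go through, and fortunately it is not needed.

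Second, the cascade as you describe it does not function. Restarting ``at $\widetilde n_1 = n_1 + O(1)$ where the error has returned to its natural scale but still encodes the smallness of $e_{n_1}$'' has no content: the greedy dynamics are Markovian in $e_n$, so once $|e_n|$ climbs back to scale $1/n$ any earlier coincidence is forgotten. The paper's bootstrap is structural and quite different. At the index $m$ with $|e_m| \le x_{m+1}-x_{m+2}$, the next two sign choices are forced --- $(+,-)$ if $a_m \le x$ and $(-,+)$ if $a_m > x$ --- so $a_{m+2} = a_m \pm (x_{m+1}-x_{m+2})$. This is \emph{exactly} the original greedy recursion, now driven by the gap sequence $y_n := x_n - x_{n+1}$, which is itself a moment sequence (with generating measure $(1-t)\,d\mu$, as in Lemma~1) and satisfies $y_n \asymp 1/n^2$. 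Lemma~2 then applies verbatim to $(y_n)$, the exceptional set accrues as a countable union of countable sets, and the iteration continues for as many discrete derivatives as one controls uniformly. This closure of moment sequences under differencing is the idea your proposal is missing, and it is what gives both the unbounded exponent and the countable exceptional set without any appeal to measure.
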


The exceptional set $X$ is needed and not empty.  Consider the example $x = \log{(2)}$. Then, for essentially the same reasons that make the Leibniz alternating series test work, we have $a_1 =1, a_2=  1 - \tfrac12, a_2 = 1 - \tfrac12 + \tfrac13$ and so on: the signs will alternate forever. In particular, for $n$ odd, 
$$ \log{(2)} = \underbrace{ 1 - \frac{1}{2} + \frac{1}{3} - \dots \pm \dots + \frac{1}{n} }_{a_n} - \frac{1}{n+1} + \frac{1}{n+2} - \dots$$
which implies that
$$ \log{(2)} - a_n = \sum_{\ell=1}^{\infty} \frac{(-1)^{\ell}}{n+\ell}  = - \frac{1+o(1))}{2n}.$$
The same argument can be carried out when $n$ is even, the greedy construction cannot approximate $\log{(2)}$ faster than at the trivial rate $1/n^{}$. Bettin-Molteni-Sanna \cite{bettin} prove a number of more precise and additional results. In particular, they also determine all possible limit points of $((x - a_n)\cdot n^k)$ and quantify the dependence of $k$ on $n$: for almost all $x \in \mathbb{R}$ one has
$$ \liminf_{n \rightarrow \infty} ~\frac{ \log |x - a_n|}{(\log n)^2} = - \frac{1}{\log 4}.$$
Thus, for almost all $x \in \mathbb{R}$ there are infinitely many $n \in \mathbb{N}$ such that $|x - a_n| \leq n^{ -0.7 \log{n}}$ which recovers an earlier result \cite{bettin0} obtained via different techniques.

\subsection{Heuristics.}
 Suppose $a_n < x < a_{n+1}$. Then 
 $$ a_{n+2} = a_{n+1} - \frac{1}{n+2} = a_n + \frac{1}{n+1} - \frac{1}{n+2} = a_n + \frac{1}{(n+1)(n+2)}.$$
 This allows for an approximation at the finer scale $n^{-2}$. Then, however, if things go well, we would maybe hope that this repeats itself and that, provided a number of conditions are satisfied, we might end up with
 $$ a_{n+4} =  a_n + \frac{1}{(n+1)(n+2)} -  \frac{1}{(n+3)(n+4)} \sim a_n  + \frac{4}{n^3}$$
allowing for an even finer approximation. To a first approximation, this is what happens. However, things are not so simple (as also evidenced by the existence of the exceptional set $X$). Another indication that things are not entirely trivial is the appearance of the Thue-Morse sequence. The Thue-Morse sequence was discovered by Thue \cite{thue, thue2} in his study of combinatorics on words
and, independently, by Morse \cite{morse} when considering of geodesics on surfaces \cite{hedlund}. One starts with 0 and then iteratively appends the Boolean complement
$$ 0 \rightarrow 0\textbf{1} \rightarrow 01 \textbf{10} \rightarrow 0110 \textbf{1001} \rightarrow 01101001\textbf{10010110} \rightarrow \dots$$
The Thue-Morse sequence is the limit
 $0110100110010110....$
This sequence shows up here: consider, for example, $x = 0.8$.  After the 55th element $a_{55}$, the next 12 sign changes correspond exactly to the first 12 elements of the Thue-Morse sequence
\begin{align*}
 a_{67} = a_{55} \underset{\textbf{0}}{-} \frac{1}{56}  \underset{\textbf{1}}{+} \frac{1}{57}  \underset{\textbf{1}}{+} \frac{1}{58}  \underset{\textbf{0}}{-} \frac{1}{59}  \underset{\textbf{1}}{+} \frac{1}{60}  \underset{\textbf{0}}{-} \frac{1}{61} \underset{\textbf{0}}{-} \frac{1}{62}  \underset{\textbf{1}}{+} \frac{1}{63}  \underset{\textbf{1}}{+} \frac{1}{64}  \underset{\textbf{0}}{-} \frac{1}{65}  \underset{\textbf{0}}{-} \frac{1}{66}  \underset{\textbf{1}}{+} \frac{1}{67}.
 \end{align*}
 This is not a coincidence and something that any proof has to account for. The argument given by Bettin-Molteni-Sanna \cite{bettin} is quite nontrivial.

\section{Result}
\subsection{General sequences.} We were motivated by the question of what happens if we replace $(1/n)$ by a more general sequence $(x_n)$. Suppose $x \in \mathbb{R}$ is a number to be approximated and $(x_n)_{n=1}^{\infty}$ is a sequence of reals.  We define the sequence $$ a_0 = 0 \qquad \mbox{and} \qquad  a_{n} = \begin{cases} a_{n-1} + x_n \qquad &\mbox{if}~a_{n-1} \leq x \\
a_{n-1} - x_n \qquad &\mbox{if}~a_n > x. \end{cases}$$
It is clear that, at that level of generality, one may not be able to hope for much. There is a trivial obstruction: if the sequence $(x_n)$ decays too quickly, one cannot hope to approximate much of anything. The other obstruction is that superpolynomial convergence requires $(x_n)$ to have some type of `smoothness' property that allows for long-range cancellations. An example is given by $x_k  = 1/p_k$, where $p_k$ is the $k-$th prime number: we see that the error is of order $\sim 1/p_k$ half the time and appears to sometimes be $\sim 1/p_k^2$ but never much better than that.

\begin{center}
\begin{figure}[h!]
\begin{tikzpicture}
\node at (0,0) {\includegraphics[width=0.4\textwidth]{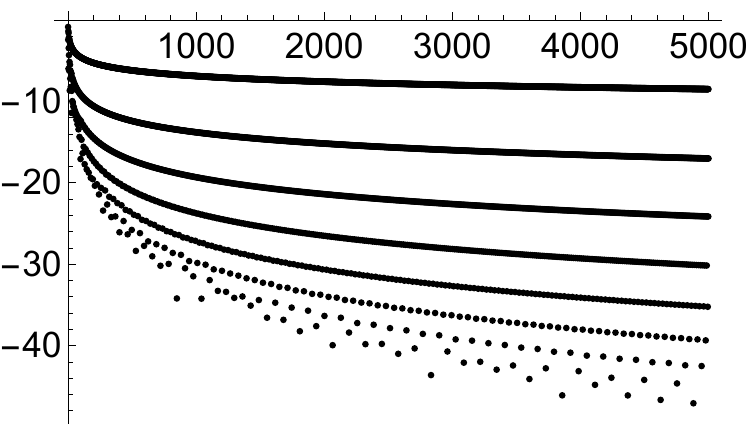}};
\node at (6,0) {\includegraphics[width=0.4\textwidth]{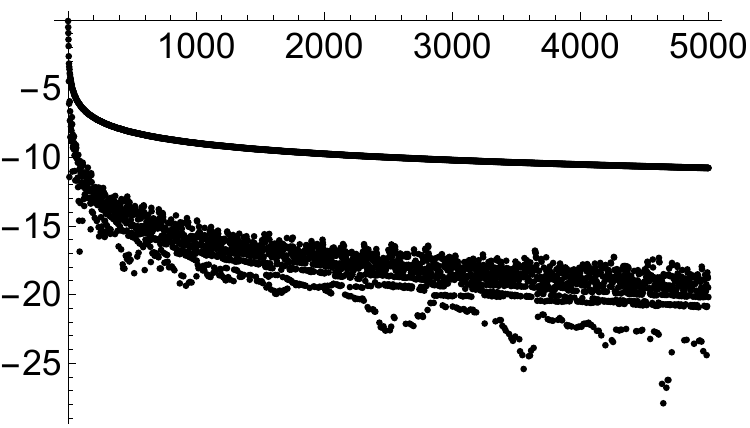}};
\end{tikzpicture}
\caption{The approximation error $\log{|a_n - \sqrt{2}|}$ for two different sequences: the usual $x_k = 1/k$ (left) and
$x_k = 1/p_k$ where $p_k$ is the $k-$th prime number (right).}
\end{figure}
\end{center}

\subsection{Moment sequences.} 
We show that \textit{moment sequences} as defined by Hausdorff \cite{hausdorff, hausdorff2, hausdorff3} have the superpolynomial approximation property. 
\begin{quote} \textbf{Definition.} For any Borel measure $\mu$ compactly supported on the unit interval $[0,1]$, we define the \textit{moment} sequence $(x_n)_{n=0}^{\infty}$ via
$$ x_n = \int_0^1 t^n d\mu.$$
\end{quote}

 If $\mu$ is the Lebesgue measure, then we recover the harmonic summands since
$$x_n = \int_0^1 t^n dt = \frac{1}{n+1}.$$
The asymptotic behavior of $x_n$ is determined by the behavior of the measure close to 1. If the measure behaves like $\mu \sim dx$, we recover $x_n \sim 1/n$. A slower rate can be attained by having the measure blow up (in an integrable way) close to 1. For example, considering the measure $\mu = dx/\sqrt{\pi - \pi x}$, we end up with 
$$ x_n = \int_0^1 t^n  \frac{1}{\sqrt{\pi}}\frac{ dt }{\sqrt{1 -  t}}=\frac{\Gamma(n+1)}{\Gamma\left(n+ \frac{3}{2} \right)} = \frac{1}{\sqrt{n}} - \frac{3}{8} \frac{1}{n^{3/2}} + \dots$$
Another nice example that also decays like $\sim 1/\sqrt{n}$ is given by setting $\mu$ to be a shifted and rescaled Wigner distribution
$$ \int_0^1 t^n   \frac{1}{\pi} \sqrt{\frac{1}{4} - \left(t - \frac{1}{2} \right)^2}  \frac{dt}{1-t}  = \frac{1}{2^{2n+1}} \binom{2n+1}{n+1} = \frac{1}{\sqrt{\pi n}} - \frac{5}{8} \frac{1}{\sqrt{\pi}} \frac{1}{n^{3/2}} + \dots $$
One could consider more exotic examples: if $\mu$ is the canonical probability measure on the fractal middle third Cantor set, then the moments are given by the sequence $1, 1/2, 3/8, 5/16, 87/320, 31/128, \dots$, this sequence decays asymptotically like $\sim n^{- \log{2}/\log{3}}$ (see \cite{goh}). A faster rate of decay can be achieved by having the measure $\mu$ vanish close to 1, for example
$$  \int_0^1 t^n \log{\left( \frac{1}{t} \right)}  \frac{dt}{t} = \frac{1}{n^2}.$$

 \subsection{Slow decay.} We can now state the main results. There is a clear difference between moment sequences $(x_n)$ that decay so slowly that $\sum x_n$ diverges and moment sequences that decay faster. If the series diverges, then we get superpolynomial approximation for all but a few exceptional real numbers.
\begin{theorem}
Let $(x_n)_{n=1}^{\infty}$ be a moment sequence with, asymptotically, slow polynomial decay, i.e. there exist $c > 0$ and $0 < \alpha \leq 1$ such that
$$  \lim_{n \rightarrow \infty} x_n \cdot n^{\alpha} = c.$$
There exists a countable set $X \subset \mathbb{R}$ so that for all $x \in \mathbb{R} \setminus X$  
$$ a_0 = 0 \qquad \mbox{and} \qquad  a_{n} = \begin{cases} a_{n-1} + x_n \qquad &\mbox{if}~a_{n-1} \leq x \\
a_{n-1} - x_n \qquad &\mbox{if}~a_n > x\end{cases}$$
has the superpolynomial approximation property: for all $k \in \mathbb{N}$ there are infinitely many $n \in \mathbb{N}$ where
$$ |x - a_n| \leq \frac{1}{n^k}.$$
\end{theorem}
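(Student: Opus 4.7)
The plan is to adapt the Thue-Morse cascade structure from Bettin-Molteni-Sanna \cite{bettin} to the moment setting, where Hausdorff's representation supplies exactly the algebraic identities needed for long-range cancellation. The foundation is the classical generating-function identity for the Thue-Morse sequence $(t_k)_{k \ge 0}$,
$$ \sum_{k=0}^{2^n - 1} (-1)^{t_k} z^k = \prod_{j=0}^{n-1} (1 - z^{2^j}), $$
which, combined with $x_n = \int_0^1 t^n d\mu$, yields the key identity
$$ \sum_{k=0}^{2^n-1} (-1)^{t_k} x_{N+k} = \int_0^1 t^N P_n(t)\, d\mu(t), \qquad P_n(t) := \prod_{j=0}^{n-1}(1-t^{2^j}). $$
Since $|P_n(t)| \le 1$ on $[0,1]$ while $P_n(t) = O\bigl((1-t)^n\bigr)$ as $t \to 1$, and since the assumption $x_n \sim c n^{-\alpha}$ forces $\mu([1-\varepsilon,1]) \lesssim \varepsilon^\alpha$ by a one-line Tauberian argument, splitting the integral at $1-n/N$ gives the quantitative bound
$$ \Bigl| \sum_{k=0}^{2^n-1}(-1)^{t_k} x_{N+k} \Bigr| \le C_n \cdot N^{-(\alpha+n)}. $$
This is the engine that drives superpolynomial approximation.

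Next I would establish a dynamical claim in the spirit of \cite{bettin}: once the greedy iterate $a_N$ enters the critical window $|a_N - x| \lesssim x_{N+1}$, the next $2^n$ sign choices made by the algorithm coincide with the first $2^n$ signs of the Thue-Morse sequence. The mechanism is the self-similar cascade anticipated by the heuristics. For a moment sequence the iterated forward differences are
$$ \Delta^\ell x_n = (-1)^\ell \int_0^1 t^n (1-t)^\ell d\mu \;\sim\; n^{-(\alpha+\ell)}, $$
so after the first overshoot the residual has size $\sim N^{-(\alpha+1)}$, which forces the next correction to match the next Thue-Morse sign, the second-order residual has size $\sim N^{-(\alpha+2)}$, and so on. Combining with the cascade bound above gives $|x - a_{N + 2^n - 1}| \le C_n N^{-(\alpha+n)}$, and choosing $n \sim \log_2 N$ at a divergent sequence of critical windows delivers the required $|x - a_n| \le n^{-k}$ for every $k$.

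To close, the exceptional set $X$ consists of those $x$ for which the algorithm eventually locks into a rigid, non-Thue-Morse sign pattern (as in the $\log 2$ example, where the signs alternate forever after some finite stage). Each such $x$ is determined by a finite initial greedy trajectory together with one of countably many trapped continuations, so $X$ is countable. The main obstacle is precisely this rigidity step: the measure-zero statement is a soft Borel-Cantelli calculation, but upgrading it to countability requires showing that the residual process on a general moment sequence cannot accidentally mimic the "wrong" pattern on arbitrarily long intervals unless $x$ is itself algebraically determined by the tail. I would approach this through the Hausdorff generating function $F(z) = \int_0^1 d\mu(t)/(1 - tz)$, whose analyticity on the unit disk, together with the identity theorem, forces distinct admissible sign patterns to produce distinct limit values except on a countable set of parameters.
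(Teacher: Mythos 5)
Your proposal reconstructs the Bettin--Molteni--Sanna route through the Thue--Morse cascade, whereas the paper deliberately takes a different path: an induction-on-scales argument built around a Main Lemma that never needs the Thue--Morse sequence. The paper's lemma shows that for all but countably many $x$, once the sequence crosses $x$ one observes the sign pattern $(-,+,+)$ or $(+,-,-)$ infinitely often; this pattern alone forces $|a_m - x| \le x_{m+1}-x_{m+2}$, and then the next two greedy steps are pinned down so that $a_{m+2} = a_m \pm(x_{m+1}-x_{m+2})$, meaning the process \emph{is} a greedy process for the gapped moment sequence $(\Delta x_n)$, which is itself a moment sequence. Iterating this one lemma $k$ times gains a factor of $1/n$ each time. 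This is cleaner and entirely elementary; it buys a countable exceptional set for free (each eventually-alternating sign pattern determines $x$ uniquely, and there are countably many finite prefixes), whereas your version has to invent a separate countability argument.

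Your algebraic starting point is sound: the Thue--Morse product identity and its pairing with the Hausdorff representation do give $\sum_{k<2^n}(-1)^{t_k}x_{N+k} = \int_0^1 t^N P_n(t)\,d\mu$. (Your split at $1-n/N$ leaves an $e^{-n}$ contribution that does not decay in $N$; use instead $P_n(t) = (1-t)^n Q_n(t)$ with $|Q_n|\le 2^{n(n-1)/2}$ and observe that this is just $2^{n(n-1)/2}\,\Delta^n x_N \lesssim_n N^{-(\alpha+n)}$, no splitting needed.) The genuine gap is the dynamical claim. As stated — ``once $|a_N-x|\lesssim x_{N+1}$, the next $2^n$ sign choices coincide with the first $2^n$ signs of Thue--Morse'' — it is false: after the forced $(+,-)$, whether the next sign is $+$ or $-$ depends on where exactly $x$ sits inside the window of width $\Delta x_{N+1}$, and for generic $x$ the Thue--Morse block will start at an index and scale determined by the fine position of $x$, not at the crossing index. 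Your cascade sketch (``the residual after the first overshoot has size $\sim N^{-(\alpha+1)}$'') conflates the residual $|a_{N+1}-x|\lesssim x_{N+1}\sim N^{-\alpha}$ with the second difference $\Delta x_{N+1}\sim N^{-(\alpha+1)}$; the correct statement and its proof are precisely the nontrivial combinatorial core of Bettin--Molteni--Sanna, and it is not supplied here. You also acknowledge that the countability of $X$ is unresolved in your framework; the proposed appeal to analyticity of $F(z)=\int d\mu/(1-tz)$ and the identity theorem is not developed into an argument that distinct tails give distinct $x$ except countably often. Both gaps are essential; without them the proposal is a plan, not a proof.
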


The conditions in this result are essentially sharp: divergence of $\sum x_n$ is clearly required if one wants to be able to approximate arbitrarily large real numbers. The necessity of an exceptional set $X \subset \mathbb{R}$ was already illustrated above. The proof shows a few other things as a byproduct, these are discussed in \S 4.

 \subsection{Fast decay.} We will now consider moment sequences that decay faster than $1/n$. Consider, the example $x_n = 1/n^2$ from above. The series converges, it will be impossible to approximate, for example, any $x \geq \pi^2/6$. However, it may be able to approximate numbers smaller than that. Trying to approximate $x = \sqrt{2}$, we see $|\sqrt{2} - a_{4566}| \sim 1 \cdot 10^{-24}$, there is reason for hope.
As mentioned in \S 2.1, there are two types of obstructions for general sequences. Divergence is clearly necessary if one wants to approximate all but an exceptional set of real numbers: this is the first trivial obstruction. The second obstruction is the presence of some type of long-range correlation that can lead to cancellation over many orders of magnitude. Moment sequences always have this required long-range correlation, the only obstruction is whether one can reach the number to be approximated. This leads to a slightly more refined result that essentially says that  if one can get close within a finite number of steps and if the $(x_n)$ do no vary greatly in scale, then one has the superpolynomial approximation property.

\begin{theorem}
Let $(x_n)_{n=1}^{\infty}$ be a moment sequence with asymptotically polynomial decay, i.e. there exist $c, \alpha > 0$ such that
$$  \lim_{n \rightarrow \infty} x_n \cdot n^{\alpha} = c.$$

There exists a countable set $X \subset \mathbb{R}$ so that for all $x \in \mathbb{R} \setminus X$, the sequence
$$ a_0 = 0 \qquad \mbox{and} \qquad  a_{n} = \begin{cases} a_{n-1} + x_n \qquad &\mbox{if}~a_{n-1} \leq x \\
a_{n-1} - x_n \qquad &\mbox{if}~a_n > x\end{cases}$$
has the following property: if there exists an index $a_m$ such that
$$ \min\left\{a_m, a_{m+1} \right\} \leq x \leq  \max\left\{a_m, a_{m+1} \right\}$$
and if there exists a finite $\ell \in \mathbb{N}$ such that for all $j \geq m$
$$ x_j \leq x_{j+1} + x_{j+2} + \dots + x_{j+ \ell},$$
then the sequence $(a_n)$ superpolynomially approximates $x$ and for all $k \in \mathbb{N}$ there are infinitely many $n \in \mathbb{N}$ where
$$ |x - a_n| \leq \frac{1}{n^k}.$$
\end{theorem}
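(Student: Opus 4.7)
The plan is to run the Bettin-Molteni-Sanna scheme (and the Theorem 1 version of it) under the hypotheses available here: the bracketing assumption plays the role of the ``reach $x$'' phase that, in Theorem 1, is handed to us for free by divergence of $\sum x_n$, while the $\ell$-step condition plays the role of the slow decay that usually keeps us bracketed. Everything else, including the extraction of Thue-Morse blocks and the treatment of the exceptional set $X$, should transfer with minor modifications.

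\emph{Moment representation and the engine of improvement.} As in the proof of Theorem 1, I would work from the identity
$$ a_n = \int_0^1 P_n(t)\, d\mu(t), \qquad P_n(t) = \sum_{k=1}^n \varepsilon_k t^k,$$
so that the approximation error becomes an integral of a signed polynomial against $\mu$. Pairing consecutive terms of opposite sign gives $x_k - x_{k+1} = \int_0^1 t^k(1-t)\, d\mu(t)$, and iterating this differencing along a Thue-Morse template of length $2^j$ produces
$$ \int_0^1 t^n (1-t)^j\, d\mu(t),$$
which decays like $n^{-(\alpha + j)}$ under the polynomial-decay hypothesis $x_n \sim c/n^\alpha$. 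This is the quantitative mechanism behind super-polynomial convergence, and it is insensitive to whether $\alpha \le 1$ or $\alpha > 1$: this is the point at which moment sequences pay off relative to general sequences such as $1/p_k$.

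\emph{Persistence of the bracket.} Next I would establish the invariant
$$ |a_n - x| \le C \, x_n \quad \text{for all } n \ge m,$$
where $C = C(\ell,\alpha)$ depends only on $\ell$ and the decay exponent. The inductive step is direct: if $a_n > x$ with $a_n - x \le C x_n$, the greedy rule keeps subtracting $x_{n+1}, x_{n+2}, \dots$ as long as we stay above $x$, but the assumption $x_j \le x_{j+1} + \cdots + x_{j+\ell}$ forces us to recross $x$ within $\ell$ steps, at which moment the error is at most $\max_{0 \le i \le \ell} x_{n+i} \le C' x_n$ by the polynomial decay. Choosing $C$ large enough closes the induction. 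This is precisely the step in Theorem 1 that is usually driven by $\sum x_n = \infty$, and the $\ell$-step assumption is designed to make it work even in the fast-decay regime.

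\emph{Super-polynomial convergence and the main obstacle.} Given persistence of the bracket, the remainder of the argument follows the combinatorial scheme of \cite{bettin}: one identifies, infinitely often, a window of consecutive indices on which the greedy signs align with a Thue-Morse pattern of prescribed length, and applies the differencing identity above to obtain $|a_n - x| \le n^{-k}$. I expect this combinatorial extraction to be the main obstacle, because the Thue-Morse alignment must be re-verified for a general measure $\mu$ rather than Lebesgue measure; the moment representation reduces the verification to a bound on $\int_0^1 t^n (1-t)^j d\mu$, but one still has to control how often the greedy rule lands in a Thue-Morse configuration, and that is where the countable exceptional set $X$ enters, collecting those $x$ for which the greedy algorithm locks into an eventually periodic, non-Thue-Morse pattern, in exact analogy with the $\log 2$ example of the introduction.
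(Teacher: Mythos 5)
Your persistence-of-bracket observation is on target: the paper's own Claim in the proof of Theorem~2 argues exactly that the hypothesis $x_j \le x_{j+1}+\dots+x_{j+\ell}$ forces a recrossing within $\ell$ steps, so that the invariant $|a_n - x| \le x_n$ (the paper's version is even tighter than your $C\,x_n$) propagates forever once the bracket $\min\{a_m,a_{m+1}\}\le x\le\max\{a_m,a_{m+1}\}$ is established. You also correctly identify that this is precisely the role otherwise played by divergence of $\sum x_n$ in Theorem~1. That diagnostic is the right one and is the only genuinely new content needed to go from Theorem~1 to Theorem~2.

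However, the core of your proposed argument — extracting Thue-Morse sign blocks and feeding them into the iterated-difference identity $\int_0^1 t^n(1-t)^j\,d\mu$ — is not completed, and you yourself flag ``how often the greedy rule lands in a Thue-Morse configuration'' as the main obstacle. That is indeed the hard combinatorial part of Bettin-Molteni-Sanna, and you have not supplied it; as stated, the proposal has a genuine gap at exactly the step where all the difficulty lives. The paper takes a different and much lighter route that bypasses Thue-Morse entirely. Its Main Lemma shows that, outside a countable set of $x$, once the sequence brackets $x$ the sign string must contain $(+,+)$ or $(-,-)$ infinitely often (the argument: an eventually alternating sign string determines $x$ uniquely from $a_N$, and there are only countably many possible $a_N$). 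Each occurrence of $(-,+,+)$ or $(+,-,-)$ immediately yields $|a_m - x| \le x_{m+1}-x_{m+2}$, and the two subsequent greedy steps then behave exactly like a greedy walk with increment sequence $(x_n - x_{n+1})$ — which is again a moment sequence (for the measure $(1-t)\,d\mu$), one order smaller. This sets up a clean induction on scales: apply the Main Lemma to the differenced moment sequence, and again, and again, gaining a factor $n^{-1}$ each time. No control over Thue-Morse frequencies is needed; one only needs the qualitative fact that a repeated sign occurs infinitely often, plus Lemma~1 to guarantee each differenced sequence still has the form $(c'+o(1))/n^{\alpha+j}$. If you want to salvage your route, you would need to actually carry out the BMS block-extraction for general $\mu$, which is substantially more work than the recursion the paper uses; otherwise, replace the Thue-Morse paragraph by the Main-Lemma recursion and your persistence argument slots in as the replacement for the divergence step.
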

 The condition
$ \min\left\{a_m, a_{m+1} \right\} \leq x \leq  \max\left\{a_m, a_{m+1} \right\}$
ensures that the sequence eventually gets close to $x$ which, given the faster decay of the sequence, may no longer be the case. 
The second condition, the existence of a uniform $\ell \in \mathbb{N}$ such that, after the crossing of $x$, we have
$ x_j \leq x_{j+1} + x_{j+2} + \dots + x_{j+ \ell}$
is automatically satisfied for large enough $j$ since $x_j = (c + o(1))/j^{\alpha}$. One can choose $\ell=2$ for all indices large enough. The condition is needed: consider approximation the number $x = 0.2$ using $x_n = 1/n^2$. Then $0 = a_0 \leq x \leq a_1 = 1$
but, for all $\ell \geq 1$, we have $x_2 + x_3 + \dots + x_{\ell} < 0.7 < 1 =x_1.$
This is unsurprising: even though the sequence manages to cross $x$, there is not enough left in the remainder to return to $x$.
If $x_n = (c+o(1))/n^{\alpha}$ for some $0 < \alpha \leq 1$, then all the extra conditions in Theorem 2 are automatically satisfied and Theorem 2 implies Theorem 1.

\section{Proofs}
 Our proof is an induction on scales argument that bypasses the Thue-Morse sequence. We first derive some basic properties of moment sequences. 
\S 3.2 contains the Main Lemma which does most of the heavy lifting. The proof of Theorem 1, in \S 3.3, ensures that Lemma 2 can be applied in an iterative fashion. The proof of Theorem 2 follows mostly the same lines and is described in \S 3.4.

\subsection{Moment sequences.} We quickly recall some elementary facts.
\begin{lemma} Suppose $(x_n)_{n=1}^{\infty}$ is a moment sequence and
$$ x_n = (1+o(1)) \frac{c}{n^{\alpha}} \qquad \mbox{for some}~c, \alpha > 0.$$
$x_n$ is monotonically decreasing. The sequence of shifts $(x_{n} - x_{n+1})_{n=1}^{\infty}$ is also a moment sequence and satisfies
$$ x_n - x_{n+1} = (1+o(1)) \frac{c \cdot \alpha }{n^{\alpha + 1}}.$$
\end{lemma}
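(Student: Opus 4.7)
The plan is to handle the three claims in order, with the first two being near-immediate from one identity and the third requiring a short Tauberian step. Everything is driven by
\[ x_n - x_{n+1} \;=\; \int_0^1 t^n (1-t)\, d\mu(t). \]
For the monotonicity claim, the integrand is nonnegative on $[0,1]$, so $x_n \geq x_{n+1}$. For the second claim, the measure $d\nu(t) := (1-t)\, d\mu(t)$ is a nonnegative finite Borel measure supported on $[0,1]$, and the displayed identity realises the shifts as the moment sequence of $\nu$.

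The asymptotic is the subtler point. One cannot simply subtract the asymptotics of $x_n$ and $x_{n+1}$: the $o(1/n^\alpha)$ error in the hypothesis $x_n = c/n^\alpha (1+o(1))$ is generally much larger than the target $c\alpha/n^{\alpha+1}$, so the moment structure must enter. My plan is to bootstrap the first two claims onto the shift sequence itself: writing $y_n := x_n - x_{n+1}$, the second claim identifies $(y_n)$ as a moment sequence, and the first claim applied to $(y_n)$ then forces $y_n \geq y_{n+1}$.

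With the nonnegative sequence $(y_n)$ now known to be monotonically decreasing, and with the tail identity $\sum_{k \geq n} y_k = x_n \sim c/n^\alpha$, a standard Tauberian squeeze closes the argument. For any fixed $\lambda > 1$, sandwiching $x_n - x_{\lceil \lambda n \rceil} = \sum_{k=n}^{\lceil \lambda n \rceil - 1} y_k$ above and below by $(\lceil \lambda n \rceil - n)$ times the largest and smallest term of this finite sum, and inserting $x_n - x_{\lceil \lambda n \rceil} \sim c(1 - \lambda^{-\alpha})/n^\alpha$, pins $n^{\alpha+1} y_n$ between two constants multiples of $c$. Letting $\lambda \downarrow 1$ and using $(1 - \lambda^{-\alpha})/(\lambda - 1) \to \alpha$ and $\lambda(\lambda^\alpha - 1)/(\lambda - 1) \to \alpha$ collapses both bounds to $c\alpha$, yielding $y_n = (1+o(1))\, c\alpha / n^{\alpha+1}$. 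The only real obstacle is conceptual: to resist the termwise subtraction and to instead exploit the moment structure as the Tauberian side condition (monotonicity of the shifts) that upgrades a tail-sum asymptotic to a pointwise one.
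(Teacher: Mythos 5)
Your proof is correct and takes essentially the same route as the paper. Both arguments rest on the identity $x_n - x_{n+1} = \int_0^1 t^n(1-t)\,d\mu$ to get monotonicity and the moment-sequence property of the shifts, then apply that monotonicity to the shift sequence itself as a Tauberian side condition, sandwiching $y_n = x_n - x_{n+1}$ between difference quotients over windows of width proportional to $n$ and letting the window ratio tend to $1$; the paper uses $[(1-\varepsilon)n, n]$ and $[n, (1+\varepsilon)n]$ where you use $[n, \lambda n]$ together with the reindexing $m \approx n/\lambda$, but these are cosmetically different versions of the same squeeze.
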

\begin{proof} By definition, $ x_n = \int_0^1 t^n d\mu \geq 0$ for some $\mu$ supported on $[0,1]$.
Then
$$ x_n - x_{n+1} = \int_0^1 (t^n - t^{n+1}) d\mu =  \int_0^1 t^n (1 - t) d\mu \geq 0.$$
This shows that $x_n - x_{n+1} \geq 0$ and it also shows that the sequence of differences is a moment sequence generated by the new measure $(1-t) d\mu$. 
Since $(x_n - x_{n+1})$ is also a moment sequence, it, too, is monotonically decreasing. This implies that, for any $\varepsilon >0$ and $n$ sufficiently large (depending on $\varepsilon$),
$$ \frac{x_{(1-\varepsilon) n} - x_{n}}{ \varepsilon n}  \geq  \frac{x_n - x_{n+1}}{1}  \geq \frac{x_n - x_{(1+\varepsilon) n}}{ \varepsilon n}.$$
The left-hand side can be, asymptotically, written as
$$ \frac{1}{\varepsilon n} \left( \frac{c +o(1)}{(1-\varepsilon)^{\alpha} n^{\alpha}} - \frac{c + o(1)}{n^{\alpha}} \right) = \frac{c +o(1)}{n^{\alpha+1}}  \frac{1}{\varepsilon} \left( \frac{1}{(1-\varepsilon)^{\alpha}} - 1\right).$$
Taking the limit $\varepsilon \rightarrow 0^+$ shows that the factor is $\alpha$. The argument for the right-hand side is identical and this establishes the desired result.
\end{proof}

\subsection{The Main Lemma.} Since our argument is recursive, the Main Lemma requires a very precise formulation that allows for bootstrapping.

\begin{lemma}[Main Lemma] Suppose $(x_n)_{n=1}^{\infty}$ is a moment sequence satisfying
$ x_n = (c+o(1))/n^\alpha$ for some $c, \alpha> 0$.
Suppose that $a_n \leq x \leq a_{n+1}$. There exists a countable set $X \subset \mathbb{R}$ such that for all $x \in \mathbb{R} \setminus X$ and for infinitely many $m \geq n$
$$ |x - a_{m}| \leq x_{m+1} - x_{m+2}.$$
\end{lemma}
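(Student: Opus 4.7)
The plan is an iterative dichotomy exploiting the second-order cancellation present at every crossing. The key geometric reformulation is the following: if $a_m \leq x \leq a_{m+1}$, then the greedy rule forces $a_{m+2} = a_{m+1} - x_{m+2} = a_m + (x_{m+1} - x_{m+2})$, so the inequality $|x - a_m| \leq x_{m+1} - x_{m+2}$ is equivalent to $a_{m+2} \geq x$. In words, a super-crossing at $m$ corresponds precisely to the event that the greedy algorithm fails to alternate at step $m+3$ (it subtracts $x_{m+3}$ despite having just subtracted $x_{m+2}$); a symmetric statement holds for crossings from above. The task therefore becomes to produce infinitely many such failures of alternation.

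Starting from the given crossing $a_n \leq x \leq a_{n+1}$, I apply the following dichotomy. Either the sign pattern $+,-,+,-,\ldots$ beginning at step $n+1$ alternates forever, or it first fails to alternate at some finite index $m \geq n$. In the former case, since $x_k \to 0$, the partial sums $a_{n+k}$ converge, and they must converge to $x$, so
$$ x = a_n + \sum_{k=1}^{\infty} (-1)^{k+1} x_{n+k}. $$
For each $n$ and each of the at most $2^n$ values $a_n$ attainable by a prior choice of signs, this fixes a single candidate value of $x$, so the set of exceptional $x$ at this stage is countable; a brief induction using Lemma~1 confirms that the alternating pattern is self-consistent with the greedy rule at such $x$. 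In the latter case I have produced my first super-crossing.

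To pass from one super-crossing to infinitely many, I iterate. After a super-crossing at $m$, one has $0 < a_{m+2} - x \leq x_{m+1} - x_{m+2} = O(m^{-\alpha-1})$ by Lemma~1, while the next step satisfies $x_{m+3} = (c+o(1))\, m^{-\alpha}$. For all sufficiently large $m$, the comparison $x_{m+3} > x_{m+1} - x_{m+2}$ holds, so $a_{m+3} = a_{m+2} - x_{m+3}$ strictly undershoots $x$, producing a fresh crossing at index $m+2$ in the reversed orientation. The dichotomy above now applies to this new crossing, yielding either a further super-crossing or a point in a new countable exceptional family indexed by the history. Iterating countably many times and taking the union of the countable exceptional sets across all levels produces a countable set $X \subset \mathbb{R}$; outside $X$ every stage of the iteration yields a super-crossing, providing the infinitely many $m$ required.

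The main obstacle lies in controlling the iteration: one must verify that each super-crossing is actually followed by a genuine subsequent crossing (so that the dichotomy can be reapplied) and that the exceptional families accumulated across countably many levels remain countable in aggregate. Both points rest on the moment-sequence asymptotics $x_n = (c+o(1))/n^{\alpha}$ together with Lemma~1, and in particular on the strict comparison $x_{m+3} > x_{m+1} - x_{m+2}$ valid for all large $m$, which is exactly what forces the recursion to keep descending to finer and finer scales rather than stalling at a fixed generation of the tree of possible histories.
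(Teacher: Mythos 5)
Your argument is essentially the paper's own proof, reorganized as an explicit iteration rather than a single counting step. The paper's steps are: (i) show stability, $|x-a_j|\le x_j$ for all $j$ past the first crossing; (ii) show that an eventually-alternating sign pattern determines $x$ uniquely once a prefix is fixed, so at most countably many $x$ alternate from some index on; and (iii) observe that each occurrence of a repeated sign, being preceded by the opposite sign, produces a pattern like $(-,+,+)$ which forces $|x-a_m|\le x_{m+1}-x_{m+2}$ at the preceding crossing. Your reformulation of the super-crossing as ``failure to alternate at step $m+3$'' and your counting of the alternating-forever exceptions are exactly steps (iii) and (ii). The only real difference is presentational: the paper dichotomizes once (``finitely many repeated signs'' vs.\ ``infinitely many'') and harvests all super-crossings at once, whereas you restart the dichotomy after each super-crossing. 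Both routes are correct. Two small imprecisions worth flagging in your write-up: the equivalence $|x-a_m|\le x_{m+1}-x_{m+2}\Leftrightarrow a_{m+2}\ge x$ silently requires $a_{m+1}>x$ strictly (a countable exception), and your claim that a super-crossing at $m$ is followed by a fresh crossing at $m+2$ uses the comparison $x_{m+3}>x_{m+1}-x_{m+2}$, which only holds for $m$ large; for small $m$ one should instead invoke the stability bound $|x-a_j|\le x_j$ (the paper's step (i), which you never explicitly establish) to guarantee that a new crossing occurs eventually, modulo countably many $x$.
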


\begin{proof}
We only consider the case $a_n \leq x \leq a_{n+1}$. The case $a_n \geq x \geq a_{n+1}$ is completely symmetric and follows from making all the necessary sign changes.\\
\textbf{1.} We first show that once the sequence $(a_n)$ is `fairly close' to $x$ in the sense of $a_n \leq x \leq a_{n+1}$, it is going to stay fairly close for all subsequent times: for all $m \geq n+1$, we have $|x - a_m| \leq x_m$. This is a simple proof by induction. Suppose that $ a_n \leq x \leq a_{n+1} = a_n + x_{n+1}$. This implies $|a_{n+1} - x| \leq x_{n+1}$. Suppose now that $|a_{m} - x| \leq x_{m}$. If $a_m > x$,
then $a_{m+1} = a_m - x_{m+1}$ and  $ \left| a_{m+1} - x \right|  \leq |x_{m+1}|.$
The case $a_m \leq x$ is identical and the desired statement follows.

\begin{center}
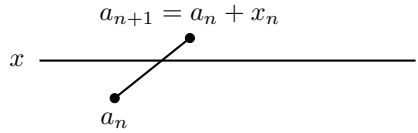
\begin{figure}[h!]
\begin{tikzpicture}
\draw [thick] (0,0) -- (5,0);
\node at (-0.3, 0) {$x$};
\filldraw (1,-0.5) circle (0.06cm);
\node at (1, -0.8) {$a_n$};
\filldraw (2,0.3) circle (0.06cm);
\node at (2, 0.6) {$a_{n+1} = a_n + x_n$};
\draw [thick] (1, -0.5) -- (2, 0.3);
\end{tikzpicture}
\vspace{-10pt}
\caption{The setting for Lemma 2.}
\end{figure}
\end{center}
\vspace{-10pt}

\textbf{2.} If, starting at a certain arbitrary index $m \in \mathbb{N}$, all subsequent choices of sign are always alternating, meaning that
$$ \forall~\ell \in \mathbb{N} \qquad \qquad a_{m+\ell} = a_m  + x_{m+1} - x_{m+2} + x_{m+3} - \dots - (-1)^{\ell} x_{m+\ell}  $$
or 
$$ \forall~\ell \in \mathbb{N} \qquad \qquad  a_{m+\ell} = a_m  - x_{m+1} + x_{m+2} - x_{m+3} + \dots - (-1)^{\ell}  \pm x_{m+\ell} ,$$
then this uniquely identifies the real number $x$. The two cases are identical (considering the second case for $m+1$ recovers the first case with shifted indices).  We may thus assume that we are in the first case. The fact that the first choice of sign is $+$ implies $a_m \leq x$. The second choice being $-$ implies $a_m + x_{m+1} > x$. The third choice being $+$ again implies $a_m + x_{m+1} - x_{m+2} \leq x$. We deduce, for all $\ell \in \mathbb{N}$,
$$ a_m + x_{m+1} -   \dots \pm \dots - x_{m+2\ell} \leq x < a_m + x_{m+1} -   \dots \pm \dots - x_{m+2\ell} + x_{m + 2\ell + 1}.$$
Abbreviating the left-hand side as
$ y_{\ell} = a_m + x_{m+1} -  \dots - x_{m+2\ell},$
we note that, $(x_n)$ being a moment sequence and thus decreasing, $ y_{\ell + 1} = y_{\ell} + x_{m+2\ell + 1} - x_{m + 2\ell + 2}  \geq y_{\ell}.$
Likewise, defining the upper bound to be $z_{\ell}$, we see that $z_{\ell}$ is decreasing. Note that $y_{\ell} \leq x < z_{\ell}$ and that both $y_{\ell}$ and $z_{\ell}$ are monotone and bounded: both have a limit. Since $z_{\ell} - y_{\ell} = x_{m+2\ell + 1} \rightarrow 0$ tends to 0, the limit is $x$ and 
$$ x = a_m + x_{m+1} - x_{m+2} + x_{m+3} - x_{m+4} + x_{m+5} - \dots  $$
\textbf{3.} The previous argument shows a little bit more: suppose we have a real number $x$ such that its sign pattern contains only a finite number of consecutive $(+, +)$ or $(-, -)$.  Then, after a certain finite index $N \in \mathbb{N}$, the sign pattern is alternating.  This identifies a unique real number.  Since the number $a_{N}$ can be written as $a_{N} = \pm x_1 \pm x_2 \pm \dots \pm x_N$ for a suitable choice of signs, we know that $a_N$ can only assume $2^N$ different values. This corresponds to a finite number of possible $x$ that exhibit this behavior. Taking a union over all $N$ and recalling that a countable union of countable sets is countable, we see that the sign patterns $(+, +)$ or $(-, -)$ appear an infinite number of times for all but at most countably many $x$.\\
\textbf{4.} Suppose now that we observe the consecutive sign choice $(+, +)$ an infinite number of times. As usual, the case $(-,-)$ is analogous. Then, for a sufficiently large index $m \geq n$, we have $x_m \sim 1/m^{\alpha}$ and we also have $|x-a_m| \leq x_m$. This means that there can be at most a finite number of consecutive sign choices that are $+$ and we can find a $-$ at the start of each chain of $(+, +)$, and therefore an occurrence of $(-, +, +)$. This sign pattern means that 
$ a_{m+3} = a_m - x_{m+1} +  x_{m+2} + x_{m+3}.$
This choice of sign implies that $a_m > x$ as well as $a_m - x_{m+1} + x_{m+2} \leq x$. Then
 $$ a_m - (x_{m+1} - x_{m+2}) \leq x < a_m$$
 which is the desired result.
\end{proof}

\subsection{Proof of Theorem 1}
Our goal is to prove that there are infinitely many integers $n \in \mathbb{N}$ with the property that
$ |x - a_n| \leq 1/n^k.$
The result has a nice constant 1 that will be obtained by a simple trick: since $k$ is arbitrary, it suffices to prove that for every $k \in \mathbb{N}$ there exist infinitely many $n \in \mathbb{N}$ such that
$ |x - a_n| \leq c_k/n^k.$
for some constant $c_k < \infty$. One can then use this result with $k+1$ instead of $k$ and, for $n$ sufficiently large, one arrives at the nice constant 1.

\begin{proof}  We fix $k$. We
choose $N$ sufficiently large so that for all $n \geq N$, the sequence $x_n$ and its first $k$ `discrete' derivatives are all reasonably well approximated by their respective asymptotic limit. Since  $x_n \cdot n^{\alpha} \rightarrow c$, there exists $N_1 \in \mathbb{N}$ such that for all $n \geq N_1$
$$ \frac{c/2}{n^{\alpha}} \leq x_n \leq \frac{2c}{n^{\alpha}}.$$
Lemma 1 ensures that requiring $x_n \cdot n^{\alpha}$ to have a limit ensures that $(x_{n} - x_{n+1})\cdot n^{\alpha +1}$ has one too and we deduce that for all $n \geq N_2$
 $$ \frac{c \alpha/2}{n^{\alpha+1}} \leq x_n - x_{n+1} \leq \frac{2c \alpha}{n^{\alpha+1}}$$
and, for all $n \geq N_3$
 $$ \frac{c \alpha (\alpha + 1)/2}{n^{\alpha+1}} \leq (x_n - x_{n+1}) - (x_{n+1} - x_{n+2}) \leq \frac{2c \alpha (\alpha + 1)}{n^{\alpha+2}}$$
 and so on for the first $k$ discrete differences. Set $N = \max(N_1, \dots, N_k)$. We additionally want that
 $$ \frac{1}{N^k} \leq \frac{1}{5} \left( \frac{1}{(N+1)^{k+1}} +  \frac{1}{(N+2)^{k+1}} + \dots +  \frac{1}{(N+10)^{k+1}} \right) \qquad \quad (\diamond)$$
 and if this is not satisfied for $N$, then we increase its value further until that inequality is satisfied as well. We now consider the sequence $a_n$ for $n \geq N$.
 Since $\sum_{n = N+1}^{\infty} x_n = \infty$, even when only considering after the first $N$ elements, the sequence is guaranteed to cross the level $x$ at some point, i.e. there exists $m \geq N$ such that
 $$ a_{m} > x \geq a_{m+1} \qquad \mbox{or} \qquad a_m \leq x < a_{m+1}.$$
 This puts us into the situation where Lemma 2 is applicable. Unless $x$ happens to be a very particular real number (from a countable set), we can deduce that for some $m \geq N$ the sequence is very close to $x$ and $|a_m - x| < x_m - x_{m+1}$ (Lemma 2 ensures that there are infinitely many indices where this is true and, in particular, there are also indices that are larger than $N$ where this is true).  Using the quantitative control on the sequence $x_n$ and its differences
$$ |a_m - x| < x_{m+1} - x_{m+2} \leq \frac{2c \alpha}{m^{\alpha + 1}} \leq \frac{4 \alpha}{m}  \frac{c/2 \alpha}{m^{\alpha}} \leq \frac{4\alpha}{m} x_m.$$ 

We have found an approximation that is an entire order of magnitude smaller than the step size $x_m$. If $a_m \leq x$, then the next sign choice is $+$ and we surely have $a_{m+1} > x$, the subsequent sign is $-$. Conversely, if $a_m > x$, then the next two signs are going to be $(-, +)$. This is perhaps the most crucial step in the argument.

\begin{center}
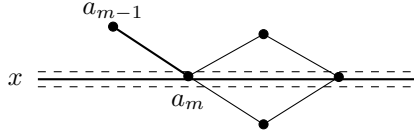
\begin{figure}[h!]
\begin{tikzpicture}
\draw [thick] (0,0) -- (5,0);
\node at (-0.3, 0) {$x$};
\filldraw (1,0.7) circle (0.06cm);
\node at (2, -0.3) {$a_m$};
\filldraw (2,0.04) circle (0.06cm);
\node at (1, 0.9) {$a_{m-1}$};
\draw [thick] (1, 0.7) -- (2, 0.04);
\draw [dashed] (0, 0.1) -- (5, 0.1);
\draw [dashed] (0,- 0.1) -- (5, -0.1);
\filldraw (3,0.6) circle (0.06cm);
\filldraw (4,0.03) circle (0.06cm);
\filldraw (3,-0.6) circle (0.06cm);
\draw (2, 0.04) -- (3, 0.6) -- (4, 0.03);
\draw (2, 0.04) -- (3, -0.6) -- (4, 0.03);
\end{tikzpicture}
\caption{The crucial step: once $a_m$ is unexpectedly close to $x$, the next two steps are fixed to either be $(+, -)$ or $(-, +)$. Moreover, the choice turns out to be consistent with picking $\pm (x_{m+1}  - x_{m+2})$ depending on whether $a_m < x$ or $a_m >x$.}
\end{figure}
\end{center}

 Summarizing, we have
$$ a_{m+2} = a_m + \begin{cases} +(x_{m+1} - x_{m+2}) \qquad &\mbox{if} ~ a_m \leq x \\
-(x_{m+1} - x_{m+2}) \qquad &\mbox{if}~a_m > x. \end{cases}$$
This, however, is \textit{exactly} the type of sequence that we are already studying, except that the moment sequence $(x_n)$ has been replaced by another moment sequence, the moment sequence of gaps $(x_n - x_{n+1})$. This only happened because $a_m$ was unexpectedly close to $x$, however, as we have already seen at the beginning of the proof of Lemma 2, this property stays preserved for sequences of this type. Thus, we will also have
$$ a_{m+4} = a_{m+2} + \begin{cases} +(x_{m+3} - x_{m+4}) \qquad &\mbox{if} ~ a_{m+2} \leq x \\
-(x_{m+3} - x_{m+4}) \qquad &\mbox{if}~a_{m+2} > x \end{cases}$$
and so on for all subsequent terms. We may consider this as passing from $0-$th approximation level ($|a_n - x| \lesssim x_n \lesssim n^{-\alpha}$)
to the first order approximation level ($|a_n - x|  \lesssim n^{-\alpha - 1}$) along a subsequence of density $1/2$. At this point, we would like to bootstrap everything: defining the new moment sequence $y_n = x_{m+n} - x_{m + n+1}$, we would now like to approximate the number $x - a_m$. To show that the argument can be repeated, one more step is required: Lemma 2 requires that we start with two consecutive terms that cross the level $x$. This is easy to do when starting with the $0-$th approximation level since the original moment sequence $(x_n)$ is divergent. This is no longer true when we are at the $1-$th approximation level or any of the other approximation level since the new moment sequences decay at a faster rate and are convergent. Luckily, we are sufficiently close: recall that the distance between $a_m$ and $x$ satisfies
 $$ |a_m - x| \leq  x_{m+1} - x_{m+2} \leq \frac{2c \alpha}{m^{\alpha + 1}}.$$
The elements of the gap sequence satisfy a corresponding lower bound at the same scale and, for all $k \geq m$, 
 $$ x_k - x_{k-1} \geq \frac{c \alpha/2}{k^{\alpha + 1}}.$$
 Since $(\diamond)$ is satisfied, this ensures a crossing of the new subsequence within at most 10 steps. We can then apply Lemma 2 to the subsequence and obtain the desired result. The argument can be continued indefinitely as long as all the discrete derivatives satisfy sufficiently strong uniform estimates (which we can ensure a priori for any finite number of derivatives as long as we are willing to drop an initial sequence of terms). We conclude with a quick note on the exceptional set: by Lemma 2, the sequence $(x_n)_{n=1}^{\infty}$ leads to a subsequence at a lower level for all but a countable exceptional set of real numbers. The same is true for the gapped sequence $(x_n - x_{n+1})$ and the 2-gapped sequence $((x_n - x_{n+1}) - (x_{n+1} - x_{n+2})) = (x_n - 2x_{n+1} + x_{n+2})$ and so on; the countable union of countable sets remains countable.
\end{proof}

\subsection{Proof of Theorem 2}

The proof of Theorem 2 is essentially the proof of Theorem 1 coupled with a careful analysis where the divergence of $\sum x_n$ was used in the proof of Theorem 1.  The moment we pass from the trivial $0-$th approximation level ($|a_n - x| \lesssim x_n \lesssim n^{-\alpha}$)
to the first order approximation level ($|a_n - x|  \lesssim n^{-\alpha - 1}$) along a subsequence, we lose the divergence of the sum over the attached moment sequence since $x_n - x_{n+1} \lesssim x_n /n$.  Indeed, the only time the divergence $\sum x_n$ is used at the very beginning of the argument: we want to enforce that both the sequence $(x_n)$ and the first $k$ `discrete derivatives' are all simultaneously within a factor of 2 of their asymptotic limit. Since all of them converge, this can always be ensured by restricting indices to some $n \geq N$ with $N$ sufficiently large. Then, since $\sum x_n$ diverges, we know that  $\sum_{n \geq N} x_n$ diverges as well, we can find some $a_m \leq x \leq a_{m+1}$ with $m \geq N$ and then use Lemma 2 and, employing uniform control on the derivative, keep applying Lemma 2 iteratively, $k$ more times because that's how many levels of derivatives we have uniform control, to get the desired result. We will now show that this is still possible with the new assumptions.\\
\textit{Claim.} There exist infinitely many $n$ so that $ a_n \leq x \leq  a_{n+1}.$

\begin{proof}[Proof of Claim] By assumption there exists at least one $m \in \mathbb{N}$ such that
$$ \min\left\{a_m, a_{m+1} \right\} \leq x \leq  \max\left\{a_m, a_{m+1} \right\}.$$
We have $|a_{m+1} - x| \leq x_{m+1}$. Since
$$ x_{m+1} \leq x_{m+2} + x_{m+3} + \dots + x_{m+\ell+1},$$
there is going to be another sign change within the next $\ell$ steps and we can iteratively show that a sign change
happens at least once every $\ell$ steps. \end{proof}
This allows us, assuming the setting of Theorem 2, to restrict ourselves to $n \geq N$ for any $N$ arbitrarily large and still ensure that Lemma 2 will be applicable. Then, as in the proof of Theorem 1, we choose $N$ so large that $(x_n)$ and the first $k$ discrete derivatives are sufficiently close to their asymptotic limit and to ensure that  $(\diamond)$ which implies that the gapped subsequence can cross $x$ again (at least $k$ times). The proof is then identical to the proof of Theorem 1.  

\section{Comments and Remarks}
\subsection{Densities.}  The argument shows a little bit more: for example, in the case of slow decay $x_n = (c+ o(1))/n^{\alpha}$ with $0 < \alpha \leq 1$, the argument immediately implies that
the set of integers for which $|a_n - x| \sim 1/n$ is, for almost all $x \in \mathbb{R}$, going to have density 1/2.
Iterating the argument on the next sub-level, we see $|a_n - x| \sim 1/n^k$ is going to happen for a set
of integers with density $2^{-k}$. 

\subsection{Convergence rates.} Incorporating more information about the error terms, one could obtain more precise convergence
rates. We quickly sketch what this would look like. Suppose $a_n \sim c n^{-\alpha}$. Provided we have reached $a_n \leq x \leq a_{n+1}$, we have $|a_{n} - x| \lesssim c n^{-\alpha}/2$ or $|a_{n+1} - x| \lesssim c n^{-\alpha}/2$. The moment we have the same sign choice in a row, we jump to the next approximation level. Until then, we have alternating sign choices at rate $x_{n+1} - x_{n} \sim c \alpha n^{-\alpha - 1}$.  This means that we might need to wait $2X$ iterations where $X$ is chosen so that
$$ \frac{c}{2 n^{\alpha}} \sim \sum_{k = n}^{X} \frac{c \alpha}{n^{\alpha + 1}} \sim c \int_n^X \frac{\alpha}{z^{\alpha+1}} dz = c \left( \frac{1}{n^{\alpha}} - \frac{1}{X^{\alpha}}\right).$$
This means, we have to wait, in the worst case, $2X \sim 2^{1 + 1/\alpha} n$ steps until hitting the next level. Rephrasing this, we expect to be able to get level $\sim \log{n}$ within the first $n$ steps. In the case of the harmonic series, this mirrors the very precise result obtained by Bettin-Molteni-Sanna \cite{bettin}.

\subsection{Special cases.} It is clear that for special cases much more can be said. Considering, for example, $x_n = 1/n^2$, the condition
$$ x_j \leq x_{j+1} + x_{j+2} + \dots + x_{j+ \ell} \qquad \qquad \qquad (\diamond)$$
fails for $j=1$ but is true for all $j \geq 2$ when $\ell = 5$.  This means that, in a suitable sense, the first term is the only problem, the only term that cannot be compensated for later. Indeed, it seems that every real number in $(2 - \pi^2/6, \pi^2/6)$ can be superpolynomially approximated using $\pm 1/n^2$. A similar type of analysis is possible for all polynomially decaying sequences where $(\diamond)$ is always eventually satisfied, the complications only come from a finite number of terms.

\subsection{Higher dimensions.}
The surprising convergence phenomenon reported by Bettin-Molteni-Sanna \cite{bettin} suggests the question whether there might be interesting analogues in higher dimensions. Perhaps the simplest special case was considered in \cite{stein}. Suppose $x_0 = \mathbf{0} \in \mathbb{R}^n$ and $(v_n)_{n=1}^{\infty}$ is a sequence of vectors in $\mathbb{R}^n$, consider the choice of signs that moves one closer to the origin.
$$ x_{n} = \begin{cases} x_{n-1} + v_n \qquad &\mbox{if}\quad \|x_{n-1}+ v_n\| < \|x_{n-1} - v_n\| \\ x_{n-1} - v_n \qquad &\mbox{if}\quad \|x_{n-1}- v_n\| < \|x_{n-1} + v_n\|. \end{cases}$$

\begin{center}
\begin{figure}[h!]
\begin{tikzpicture}
\node at (0,0) {\includegraphics[width=0.25\textwidth]{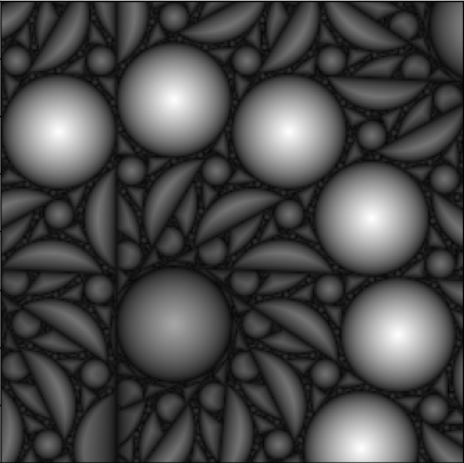}};
\node at (6,0) {\includegraphics[width=0.25\textwidth]{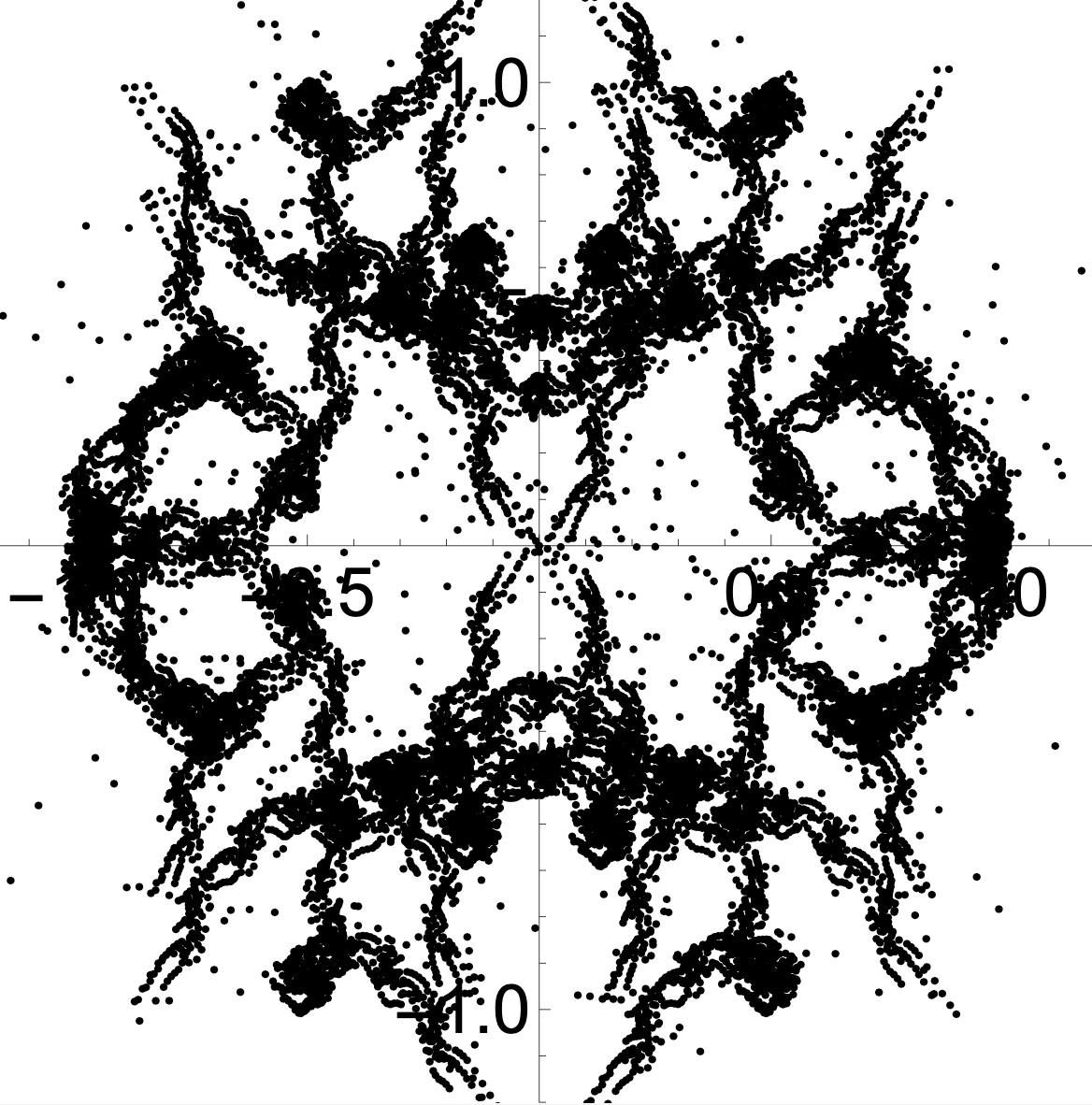}};
 
\end{tikzpicture}
\caption{Left: a figure from \cite{stein}, right: the `Rorschach' sequence arising from $v_n = \exp(2 \pi i \|\sqrt{3} n\|) $ reported in \cite{albors}.}
\end{figure}
\end{center}

This was considered by Steinerberger-Zeng \cite{stein} in $\mathbb{R}^2 \cong \mathbb{C}$ for the special case of $v_n = e^{2\pi i n \alpha}$, the arising sequence can be at least partially explained. Another surprising example \cite{albors} is the following: using $\| \cdot \|$ to denote the distance to the nearest integer, i.e.
$ \|x\|= \min( x- \left\lfloor x \right\rfloor, \left\lceil x \right\rceil - x)$, the sequence of vectors $v_n = \exp(2 \pi i \|\sqrt{3} n\|) $
leads to the `Rorschach' picture which remains unexplained.

\end{document}